\documentclass{amsart}
\usepackage[utf8]{inputenc}
\usepackage{amsmath, amsthm, amssymb, amsfonts, mathtools, stmaryrd, latexsym, enumerate}

\pdfoutput=1

\usepackage[dvipsnames]{xcolor}
\usepackage{float}
\usepackage{enumitem}
\usepackage{tikz}
\usepackage{comment}
\usepackage[linesnumbered,lined,commentsnumbered]{algorithm2e}

\usepackage{listings}
\lstset{basicstyle=\footnotesize\ttfamily,breaklines=true}
\lstset
{
    keywordstyle = [1]\ttfamily\color{blue!60!black},
    keywordstyle = [2]\ttfamily\color{purple!70!black},
    keywordstyle = [3]\ttfamily\color{green!50!black},
    keywordstyle = [4]\ttfamily\color{cyan!50!black},
}

\usepackage{graphicx}
\usepackage{caption}
\usepackage[all]{xypic}
\usepackage{verbatim}
\usepackage{subcaption}

\usepackage[colorlinks]{hyperref}
\hypersetup{
    colorlinks,
    linkcolor={BrickRed},
    citecolor={ForestGreen},
    urlcolor={NavyBlue}
}
\usepackage[nameinlink]{cleveref}

\usepackage[parfill]{parskip}
\usepackage[margin=1in]{geometry}

% THEOREM Environments ---------------------------------------------------
\theoremstyle{plain}
\newtheorem{theorem}{Theorem}
\numberwithin{theorem}{section}
\newtheorem{proposition}[theorem]{Proposition}
\newtheorem{lemma}[theorem]{Lemma}
\newtheorem{corollary}[theorem]{Corollary}
\newtheorem{definition}[theorem]{Definition}

\theoremstyle{definition}
\newtheorem{remark}[theorem]{Remark}
\newtheorem{example}[theorem]{Example}

\newtheorem{question}[theorem]{Question}
\newtheorem*{notation}{Notation}
\newtheorem*{claim}{Claim}

%-----------------------------------------------------------------------------

\newcommand{\bbC}{\mathbb{C}}
\newcommand{\bbG}{\mathbb{G}}
\newcommand{\bbP}{\mathbb{P}}

\newcommand{\calQ}{\mathcal{Q}}
\newcommand{\calO}{\mathcal{O}}

%-----------------------------------------------------------------------------

\title{Algebraic surfaces as Hadamard products of curves}

\author{Dario Antolini}
\address{%
	Dario Antolini\newline
	Dipartimento di Matematica, Università di Trento\newline
	Email: \href{mailto:dario.antolini-1@unitn.it}{dario.antolini-1@unitn.it}
}

\author{Edoardo Ballico}
\address{%
	Edoardo Ballico\newline
	Dipartimento di Matematica, Università di Trento\newline
	Email: \href{mailto:edoardo.ballico@unitn.it}{edoardo.ballico@unitn.it}
}

\author{Alessandro Oneto}
\address{%
	Alessandro Oneto\newline
	Dipartimento di Matematica, Università di Genova\newline
	Email: \href{mailto:alessandro.oneto@unige.it}{alessandro.oneto@unige.it}
}

\thanks{This paper is published in Journal of Algebra 697:725-735 (2026).  \href{https://doi.org/10.1016/j.jalgebra.2026.03.008}{doi:10.1016/j.jalgebra.2026.03.008}.}

\keywords{Hadamard products of algebraic varieties, quadratic surfaces, algebraic surfaces}
\subjclass{14J25, 14M99}
	
\begin{document}

\begin{abstract} 
    We study projective surfaces in $\bbP^3$ which can be written as Hadamard product of two curves. We show that quadratic surfaces which are Hadamard product of two lines are smooth and tangent to all coordinate planes, and such tangency points uniquely identify the quadric. The variety of such quadratic surfaces corresponds to the Zariski closure of the space of symmetric matrices whose inverse has null diagonal. For higher-degree surfaces which are Hadamard product of a line and a curve we show that the intersection with the coordinate planes is always non-transversal.
\end{abstract}

\keywords{Hadamard products of algebraic varieties, quadratic surfaces, algebraic surfaces}
\subjclass{14J25, 14M99}

\maketitle

\section{Introduction}
Motivated by the study of algebraic statistical models called {\it Restricted Boltzmann Machines}, the definition of {\it Hadamard product} of algebraic varieties has been introduced in \cite{cueto2010geometry}. This work was followed by several papers which continued the study of Restricted Boltzmann Machines and related models by means of algebraic and tropical geometry; see e.g. \cite{cueto2010implicitization,montufar2015discrete,montufar2018restricted,seigal2018mixtures,oneto2023hadamard}. At the same time, Hadamard products of algebraic varieties have been studied from a purely algebro-geometric perspective following several directions. In \cite{bocci2016hadamard,bocci2017hadamard}, the authors considered Hadamard products of linear spaces, while in \cite{bocci2018hilbert} properties of Hilbert functions of Hadamard products were investigated. In \cite{carlini2019starconfig}, the authors generalized a construction of \cite{bocci2016hadamard} to describe the construction of star configurations through Hadamard products. In \cite{bocci2023gorenstein}, Hadamard products were used to construct Gorenstein sets of points in $3$-dimensional projective space. In \cite{bocci2022hadamard}, Hadamard products of hypersurfaces were considered. A uniform presentation of the theory of Hadamard products of algebraic varieties can be found in \cite{bocci2024hadamard}.

This paper fits into this line of research by looking for characterizations of algebraic varieties which are Hadamard products of smaller varieties. In particular, we begin to address the following question: {\it when can an algebraic surface in $3$-dimensional projective space be written as an Hadamard product of two curves?} 

In \cite{ballico2025projective}, the second author showed that very general surfaces of degree at least $4$ cannot be written as Hadamard product of two curves.

In \Cref{sec:quadratic} we investigate properties of quadratic surfaces in $3$-dimensional projective space which are Hadamard products of two lines. We observe that these are always smooth and tangent to the four coordinate planes. This allows us to deduce a complete characterization of the space of such quadrics.

\begin{theorem}
\label{theorem:quadraticsurfaces}
    The variety of quadratic surfaces that are Hadamard products of two lines is 5-dimensional and it corresponds to the variety of quadrics defined by inverses of symmetric matrices with null diagonal.
\end{theorem}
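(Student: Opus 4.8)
The plan is to prove both assertions — the dimension and the matrix description — by exhibiting two naturally defined irreducible $5$-dimensional subvarieties of the $\mathbb{P}^9$ of quadrics in $\mathbb{P}^3$ and showing they coincide. Write a quadric as $Q_M=V(x^\top M x)$ for a symmetric $4\times 4$ matrix $M$, and let $\mathcal T$ be the locus of quadrics tangent to all four coordinate planes $\{x_i=0\}$. The first step is the matrix characterization of $\mathcal T$. For smooth $Q_M$ the tangent plane at $p\in Q_M$ is $V((Mp)^\top x)$, so $Q_M$ is tangent to $\{x_i=0\}$ precisely when some $p\in Q_M$ has $Mp\parallel e_i$, i.e.\ $p\parallel M^{-1}e_i$, the $i$-th column of $M^{-1}$. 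Since both the $i$-th coordinate of $M^{-1}e_i$ and the value $p^\top M p$ at $p=M^{-1}e_i$ equal $(M^{-1})_{ii}$, tangency to $\{x_i=0\}$ is equivalent to $(M^{-1})_{ii}=0$. Thus $\mathcal T=\overline{\{[M]:M^{-1}\text{ has null diagonal}\}}$, with tangency point on $\{x_i=0\}$ equal to the $i$-th column of $M^{-1}$; symmetry of $M^{-1}$ then turns the four projective tangency points into a linear system recovering $M^{-1}$ up to scale, which is the uniqueness assertion.

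To pin down the dimension and irreducibility of $\mathcal T$, I would use the birational involution $\iota\colon[N]\mapsto[\mathrm{adj}(N)]$ on the $\mathbb{P}^9$ of symmetric matrices (matrix inversion, given by the cubic adjugate entries). The symmetric matrices with null diagonal form a linear subspace of dimension $6$, hence a $\mathbb{P}^5\subset\mathbb{P}^9$, and by the previous paragraph $\mathcal T=\overline{\iota(\mathbb{P}^5)}$. As $\iota$ is an involution it is generically injective, so $\mathcal T$ is irreducible of dimension exactly $5$. This identifies the target variety of the theorem; it remains to show that the Hadamard locus $\mathcal H=\overline{\{L_1\star L_2\}}$ equals $\mathcal T$.

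Next I would establish $\mathcal H\subseteq\mathcal T$ by a direct geometric computation. A bilinear parametrization of $L_1,L_2$ realizes $L_1\star L_2$ as the image of a $(1,1)$-map $\mathbb{P}^1\times\mathbb{P}^1\to\mathbb{P}^3$, hence generically a smooth quadric. Letting $A_i\in L_1$ and $B_i\in L_2$ be the points whose $i$-th coordinate vanishes, the sets $A_i\star L_2$ and $L_1\star B_i$ are two lines on $L_1\star L_2$ lying in $\{x_i=0\}$; since a plane section of a quadric is a conic, that conic must be the degenerate union of these two lines, so $L_1\star L_2$ is tangent to $\{x_i=0\}$ at $A_i\star B_i$. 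Hence $\mathcal H\subseteq\mathcal T$, and $\mathcal H$ is irreducible as the image of $\mathrm{Gr}(2,4)\times\mathrm{Gr}(2,4)$.

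Finally, since $\mathcal H\subseteq\mathcal T$ with both irreducible and $\dim\mathcal T=5$, it suffices to prove $\dim\mathcal H=5$, for which I would compute the generic fiber of $(L_1,L_2)\mapsto L_1\star L_2$. The $3$-dimensional torus $T=(\mathbb{C}^\ast)^4/\mathbb{C}^\ast$ acts by $(L_1,L_2)\mapsto(gL_1,g^{-1}L_2)$ without changing the product, giving a $3$-dimensional orbit in each fiber and the bound $\dim\mathcal H\le 8-3=5$. For the reverse inequality I would show the fiber is \emph{exactly} this orbit: the two rulings of the smooth quadric $L_1\star L_2$ are intrinsically $\{A\star L_2:A\in L_1\}$ and $\{L_1\star B:B\in L_2\}$, so any factorization $L_1'\star L_2'$ must reproduce these two families, and matching them forces $(L_1',L_2')=(gL_1,g^{-1}L_2)$. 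Then $\dim\mathcal H=5$ and $\mathcal H=\mathcal T$. I expect this last step — confining the fiber to the torus orbit via the ruling structure — to be the main obstacle, since it is where one must exclude sporadic extra factorizations and control the degenerate loci absorbed by the Zariski closures.
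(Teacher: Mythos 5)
Your overall architecture coincides with the paper's: the tangency characterization $(M^{-1})_{ii}=0$ (the paper gets it by quadric duality, you by a direct tangent-plane computation), the irreducibility and dimension of the tangency locus via the adjugate parametrization of null-diagonal matrices, the inclusion of the Hadamard locus via the reducible plane sections, and the final reduction to showing that the generic fiber of $(L_1,L_2)\mapsto L_1\star L_2$ is exactly $3$-dimensional. Up to that last step your proposal is sound (your ``generically a smooth quadric'' assertion also needs a word of justification, but smoothness is an open condition on quadrics, so one explicit smooth example suffices; the paper instead proves smoothness of every such product via Zariski's Main Theorem in \Cref{lemma:hadamardproduct_smooth}).

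The genuine gap is exactly the step you flag: ``matching the rulings forces $(L_1',L_2')=(gL_1,g^{-1}L_2)$.'' Matching the rulings only yields that $L_1'=h\star L_1$ and $L_2'=g^{-1}\star L_2$ for two \emph{a priori unrelated} diagonal elements $h,g$; the condition $L_1'\star L_2'=W$ then says $hg^{-1}\in\mathrm{Stab}_T(W)$, the stabilizer of $W$ in the diagonal torus $T$, not $h=g$. So the ruling argument only confines the fiber to a set of dimension $3+\dim\mathrm{Stab}_T(W)-\dim\mathrm{Stab}_T(L_1)-\dim\mathrm{Stab}_T(L_2)$, and you still must prove that $\mathrm{Stab}_T(W)$ is finite for generic $W$ in the Hadamard locus. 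This is not a vacuous worry: the Segre quadric $x_0x_3-x_1x_2=0$ \emph{is} a Hadamard product of lines (take $L_1=\langle(1{:}0{:}1{:}0),(0{:}1{:}0{:}1)\rangle$, $L_2=\langle(1{:}1{:}0{:}0),(0{:}0{:}1{:}1)\rangle$), and its diagonal stabilizer $\{t_0t_3=t_1t_2\}$ is $2$-dimensional; there the fiber is nevertheless $3$-dimensional only because these particular factor lines have $1$-dimensional stabilizers compensating, a coincidence your formal implication does not see. To close the argument you would need to show, e.g., that the symmetric matrix of a generic product quadric has all entries nonzero (hence trivial diagonal stabilizer), which again comes down to exhibiting one explicit example and invoking semicontinuity. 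This missing genericity input is precisely what the paper supplies by other means: it computes the fiber over the Segre quadric explicitly in affine Pl\"ucker charts, finds it to be $3$-dimensional, and concludes for the generic fiber by semicontinuity of fiber dimension. Your route is completable, but as written the key forcing step is asserted rather than proved, and it is not a formal consequence of the ruling structure alone.
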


Moreover, given a quadratic surface which is Hadamard product of lines, we investigate the geometry of the four singular points of the four reducible conics obtained as intersection of the quadric with the coordinate planes. In \Cref{theorem:identifiability}, we notice that such points uniquely determine the quadratic surface.

In \Cref{sec:higher_degrees}, we approach the case of higher-degree surfaces. In \Cref{cor:non-trasversal}, we provide an extension of \Cref{theorem:quadraticsurfaces} by proving that if a surface $W$ of degree at least $3$ is the Hadamard product of a line and a curve, then the intersection of $W$ with the coordinate planes is not transversal. 

The codes used for some of the proofs and examples of the paper were written in the language of Macaulay2 \cite{M2} and have been collected in \cite{datahadamardquadric}.

\subsection*{Funding} 
All authors thank the TensorDec Laboratory of the Department of Mathematics of the University of Trento for useful discussions and offering a fruitful research environment. All authors are members of INdAM-GNSAGA. AO has been partially supported by MUR (Ministero dell'Università e della Ricerca, Italy) through the PRIN2022SC project Prot.\ 2022NBN7TL (CUP: E53C24\-002320006) ``Applied Algebraic Geometry of Tensors''.

\section{Hadamard product}
\label{sec:definitions}

We first recall the basic definitions. For a more extensive presentation, see \cite{bocci2024hadamard}. Let $V$ be an $(N+1)$-dimensional $\bbC$-vector space on which we fix a basis $e_0, \ldots, e_N$. 

\begin{definition}
    The \emph{Hadamard product} over the projective space $\mathbb{P}^N = \bbP V$ is the coordinate-wise multiplication, i.e., given $p = (p_0:\cdots:p_N)$ and $q = (q_0:\cdots:q_N)$, then $p \star q := (p_0q_0:\cdots:p_Nq_N).$ 
\end{definition}

The Hadamard product is not well-defined everywhere but it defines a rational map $h\colon \bbP^N \times \bbP^N \dashrightarrow \bbP^N$. Such rational map corresponds to the composition of the Segre embedding of $\bbP^N \times \bbP^N$ in $\bbP^{(N+1)^2-1}$ with the linear projection onto the {\it diagonal coordinates}. 

\begin{definition}
    Given two algebraic varieties $X,Y \subset \bbP^N$, their {\it Hadamard product} is the Zariski closure of the image of $X \times Y$ through the Hadamard map, i.e.,
    \begin{equation}
    \label{def:HadamardProduct}
        X \star Y := \overline{h(X \times Y)} = \overline{\{p \star q ~:~ p \in X, \ q \in Y, \ p \star q \text{ exists}\}.}
    \end{equation}
    In particular, given an algebraic variety $X \subset \bbP^N$, we define its {\it Hadamard powers} iteratively as:
    \[
        X^{\star 0} := (1:\cdots:1) \quad \text{ and } \quad X^{\star s} := X \star X^{\star (s-1)}.
    \]
\end{definition}

\begin{remark}
\label{rmk:hadamardirreducible}
    If $X, Y \subset \bbP^N$ are irreducible, then $X \star Y \subset \bbP^N$ is irreducible, since it is (the closure of) the image of a rational map.
\end{remark}

\begin{remark}
\label{remark:stabilizer}
    The subgroup $D_N \subset \text{Aut}(\bbP^N)$ of invertible diagonal matrices, which is isomorphic to the $N$-dimensional torus $\{ (x_0 : \cdots : x_N) \in \bbP^N : \prod_i x_i \neq 0 \}$ of $\bbP^N$, preserves the Hadamard product, in the sense that $X \star Y = g(X) \star g^{-1}(Y)$ for all $g \in D_N$.
\end{remark}

%%%%%%%%%%%%%

\section{Quadratic surfaces as Hadamard products of lines}
\label{sec:quadratic}

\subsection{Smoothness and tangencies: a complete characterization}
\label{sec:smoothquadratic}

In this section we give a complete characterization of quadratic surfaces that are Hadamard products of lines by proving \Cref{theorem:quadraticsurfaces}. Before showing the proof, let us recall a few basic facts on Hadamard products of lines.

\begin{notation}
    For every $i = 0,\ldots,N$, let $H_i = \{x_i = 0 \}$ be the $i$-th coordinate hyperplane in $\bbP^N$. We will call {\em coordinate point} (resp.\ {\em line}, {\em plane}) any intersection of $N$ (resp.\ $N-1$, $N-2$) coordinate hyperplanes. For any $p_1,\ldots, p_m \in \bbP^N$, $\langle p_1,\ldots, p_m\rangle$ denotes their linear span in $\bbP^N$.
\end{notation}

\begin{remark}
\label{rmk:hadamard_product_lines}

Let $L, R \subset \bbP^3$ be lines.

\begin{enumerate}
    \item If $L = R = \langle p,q \rangle$, then $L^{*2}$ is contained in a plane, in particular it cannot be a quadratic surface. Indeed, either $L = H_i \cap H_j$ is a coordinate line, hence $L^{\star 2} = L$ or $p$ and $q$ can be chosen to be not coordinate points and $L^{\star 2} \subset \langle p \star L, q \star L\rangle$ where the latter is at most $2$-dimensional because $p \star L$ and $q \star L$ are at most lines with non-trivial intersection since $p \star q \in (p \star L) \cap (q \star L)$.

    \item If $L$ or $R$ is contained in a coordinate plane, then $L \star R$ is contained in the same coordinate plane, in particular it cannot be a quadratic surface. 
\end{enumerate}
\end{remark}

\begin{lemma}
    \label{lemma:hadamardproduct_smooth}
    Let $L,R\subset\bbP^3$ be two lines such that $W = L\star R$ is a quadratic surface. Then, $W$ is smooth and it is tangent to all four coordinate planes.
\end{lemma}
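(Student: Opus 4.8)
The plan is to realise the Hadamard product of two lines as the image of the Segre quadric under a single linear map, after which both assertions become transparent. I would write $L = \langle a,b\rangle$ and $R = \langle c,d\rangle$ and parametrise them as $(s:t)\mapsto sa+tb$ and $(u:v)\mapsto uc+vd$. The Hadamard map then sends $\big((s:t),(u:v)\big)$ to the point of $\bbP^3$ whose $i$-th coordinate is
\[
y_i = (a_i s + b_i t)(c_i u + d_i v) = a_i c_i\,(su) + a_i d_i\,(sv) + b_i c_i\,(tu) + b_i d_i\,(tv).
\]
Hence the parametrisation $\Phi\colon \bbP^1\times\bbP^1\to\bbP^3$ of $W$ factors as $\Phi = M\circ\sigma$, where $\sigma$ is the Segre embedding with image the smooth quadric $Q=\{z_{00}z_{11}=z_{01}z_{10}\}$ and $M$ is the $4\times 4$ matrix whose columns are the four Hadamard products $a\star c,\ a\star d,\ b\star c,\ b\star d$ of the spanning points. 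In particular $W = M(Q)$.

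For smoothness I would show that $M$ is invertible. By \Cref{rmk:hadamardirreducible} the surface $W$ is irreducible, and an irreducible quadratic surface in $\bbP^3$ cannot lie in a plane, so $W$ spans $\bbP^3$. Since $Q$ already spans $\bbP^3$ and $W = M(Q)$, the linear map $M$ must be surjective, hence the square matrix $M$ has full rank. Then $M$ is an automorphism of $\bbP^3$ and $W=M(Q)$ is projectively equivalent to the smooth quadric $Q$; as smoothness is preserved by projective equivalence, $W$ is smooth (and in particular not a quadric cone).

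For the tangencies I would examine, for each $i$, the preimage $\Phi^{-1}(H_i)$ of the coordinate plane $H_i=\{y_i=0\}$. Because $W\not\subseteq H_i$, \Cref{rmk:hadamard_product_lines}(2) shows that neither $L$ nor $R$ lies in $H_i$, so $(a_i,b_i)\neq 0$ and $(c_i,d_i)\neq 0$ and $y_i$ is a product of two genuine linear forms. Its zero locus in $\bbP^1\times\bbP^1$ is therefore the union of the two rulings $\{a_i s+b_i t=0\}$ and $\{c_i u+d_i v=0\}$, one from each family, meeting in a single point $P_i$. Applying the isomorphism $M\circ\sigma$, these map to two distinct lines of $W$ through $\Phi(P_i)$, so $W\cap H_i$ is a reducible (rank-$2$) conic. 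For a smooth quadric a plane section degenerates to a pair of lines exactly when the plane is the tangent plane at their common point; thus $H_i$ is tangent to $W$ at $\Phi(P_i)$, and running $i=0,1,2,3$ gives tangency to all four coordinate planes.

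The step I expect to be the crux is the invertibility of $M$: once the Segre factorisation is in place everything else is formal. The hypothesis that $W$ is an honest two-dimensional quadric is precisely what forces $M$ to have full rank, and \Cref{rmk:hadamard_product_lines} is what excludes the degenerate configurations (such as $L=R$, or a line contained in a coordinate plane) in which $M$ would drop rank and $W$ would fail to be a quadratic surface.
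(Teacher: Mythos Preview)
Your argument is correct and takes a genuinely different, more elementary route than the paper. The paper shows that $h_{L,R}$ is an isomorphism by first arguing it is a finite morphism (via Grauert's contractibility criterion, since $\bbP^1\times\bbP^1$ has no curves of negative self-intersection), then birational (by comparing degrees), and finally an isomorphism by Zariski's Main Theorem together with the normality of any quadric surface. Your factorisation $\Phi=M\circ\sigma$ bypasses all of this: once you observe that $W$, being an irreducible quadratic surface, must span $\bbP^3$, the matrix $M$ with columns $a\star c,\,a\star d,\,b\star c,\,b\star d$ is forced to have full rank, and smoothness follows immediately from projective equivalence with the Segre quadric. This is both shorter and more explicit; it also makes transparent a point the paper glosses over, namely why the four $(1,1)$-sections defining $h_{L,R}$ form the \emph{complete} linear system $|\calO_{L\times R}(1,1)|$ (equivalently, why the map is base-point free). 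For the tangencies, the two arguments essentially coincide: both produce a line inside $W\cap H_i$ and conclude that the conic is reducible, hence $H_i$ is tangent. One small remark: when you write ``$W=M(Q)$'' before having established that $M$ is invertible, it would be cleaner to say that $W$ is contained in the projectivised image of the linear map $M$, which is a linear subspace of dimension $\operatorname{rank}(M)-1$; the spanning argument then forces $\operatorname{rank}(M)=4$, after which $W=M(Q)$ holds on the nose.
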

\begin{proof}
    By assumption, $L \star R$ is not contained in any coordinate plane: in particular, we deduce that $L$ and $R$ are not contained in any coordinate plane and $L \neq R$. Consider the restriction $h_{L,R} \colon L \times R \dashrightarrow W$ of the Hadamard map on $\bbP^3\times\bbP^3$. A priori, this is only a rational map; however, since it is defined by the $3$-dimensional complete linear system $|\mathcal{O}_{L\times R}(1,1)|$, $h_{L,R}$ is actually a morphism. Note that $h_{L,R}$ is finite. In fact, suppose by contradiction that $h_{L,R}$ is not finite. Hence, since $h_{L,R}$ does not map $L \times R$ to a single point, we have that $h_{L,R}$ contracts a curve $C \subset L \times R$. However, by Grauert's criterion \cite[Proposition III.2.1]{barth2015compact}, this implies that $C$ is a curve with negative self-intersection, which is a contradiction because $L \times R \simeq \bbP^1 \times \bbP^1$ contains no such curves. Moreover, since by assumption $\deg(L\times R) = \deg(W)$, $h_{L,R}$ is also birational. 
    
    Recall that $W$ is always normal: even if $W$ was a singular cone, then it is normal by a criterion of Serre, see \cite[Theorem II.8.22A]{hartshorne1977algebraic}. Therefore, since $h_{L,R}$ is a birational morphism with target a normal variety, then, by Zariski's Main Theorem \cite[Corollary III.11.4]{hartshorne1977algebraic}, $h_{L,R}$ is actually an isomorphism and $W$ is smooth.

    Consider now the intersections $p_i = L \cap H_i$ for all $i = 0,\ldots,3$. Then, $p_i \star R$ is a line which is contained in $W \cap H_i$. Since $W \cap H_i$ contains a line, it is a reducible conic for all $i = 0,\ldots,3$ and by this we conclude. 
\end{proof}

The latter characterization of quadratic surfaces which are Hadamard products of lines allows us to prove \Cref{theorem:quadraticsurfaces}.

\begin{notation}
    We denote by $\bbG(1,3)$ the Grassmannian of lines in $\bbP^3$.
\end{notation}

\begin{proof}[Proof of \Cref{theorem:quadraticsurfaces}]
    Inside the $9$-dimensional space of quadratic surfaces in $\bbP^3$, let 
    \[
        H = \overline{\{W = L \star R ~:~ L,R \text{ lines}, \ W \text{ quadratic surface}\}}.
    \]
    In other words, $H$ is the closure of the image of the rational map $\varphi \colon \bbG(1,3)\times\bbG(1,3) \dashrightarrow H$ which sends a generic pair of lines to their Hadamard product.

    The generic fiber of $\varphi$ is at least 3-dimensional: indeed, $\varphi^{-1}(L\star R)$ contains all pairs $(g(L),g^{-1}(R))$ for all $g \in D_3$ as in \Cref{remark:stabilizer}. 

    \begin{claim}
        Let $\calQ = \{x_0x_3 - x_1x_2 = 0\}$ be the Segre quadric in $\bbP^3$. Then, $\varphi^{-1}(\calQ)$ is $3$-dimensional.
    \end{claim}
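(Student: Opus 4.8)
The plan is to use the isomorphism provided by \Cref{lemma:hadamardproduct_smooth} to pin down, up to a single discrete choice, which pairs $(L,R)$ can occur, and then to reduce the statement to one scalar equation. Applying \Cref{lemma:hadamardproduct_smooth} with $W=\calQ$, the restricted Hadamard map $h_{L,R}\colon L\times R\to\calQ$ is an isomorphism; since both sides are isomorphic to $\bbP^1\times\bbP^1$, it carries the two rulings of $L\times R$ to the two rulings of $\calQ$. Writing $p_i=L\cap H_i$ and $q_i=R\cap H_i$ (single points, as $L,R$ lie in no coordinate plane), the same computation as in the lemma gives $\calQ\cap H_i=(p_i\star R)\cup(L\star q_i)$, where the lines $p\star R$ (for $p\in L$) sweep out one ruling and the lines $L\star q$ (for $q\in R$) the other. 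On the other hand each $\calQ\cap H_i$ is an explicit pair of coordinate lines, and the four lines occurring are $\ell_{01},\ell_{02},\ell_{13},\ell_{23}$, where $\ell_{jk}=\{x_j=x_k=0\}$. Since two lines on $\calQ$ share a ruling exactly when they are disjoint, $\ell_{01},\ell_{23}$ lie in one ruling and $\ell_{02},\ell_{13}$ in the other (indeed $\ell_{01}\cap\ell_{02}=\{(0:0:0:1)\}\ne\varnothing$).

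Next I would match the two descriptions ruling by ruling. Fixing the labelling so that the ruling $\{p\star R\}$ is the one containing $\ell_{01},\ell_{23}$, the intersections $\calQ\cap H_i$ force $p_0\star R=p_1\star R=\ell_{01}$ and $p_2\star R=p_3\star R=\ell_{23}$, while $L\star q_0=L\star q_2=\ell_{02}$ and $L\star q_1=L\star q_3=\ell_{13}$. Because distinct fibres $\{p\}\times R$ map under the isomorphism to disjoint, hence distinct, ruling lines, the map $p\mapsto p\star R$ is injective on $L$; thus $p_0=p_1$ and $p_2=p_3$, so $L$ is the transversal joining a point of $\ell_{01}$ to a point of $\ell_{23}$. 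Symmetrically $q_0=q_2$ and $q_1=q_3$, so $R$ joins a point of $\ell_{02}$ to a point of $\ell_{13}$. The opposite labelling of the two rulings only interchanges the roles of $L$ and $R$, contributing the image of this family under $(L,R)\mapsto(R,L)$, which has the same dimension.

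Finally I would parametrise and impose $L\star R=\calQ$. Writing the point of $\ell_{01}$ as $(0:0:a:b)$ and that of $\ell_{23}$ as $(c:d:0:0)$ gives $L=\{(sc:sd:ta:tb)\}$, and likewise $R=\{(ug:ve:uh:vf)\}$ from points $(0:e:0:f)\in\ell_{02}$ and $(g:0:h:0)\in\ell_{13}$. A direct computation yields $L\star R=\{(cg\,su:de\,sv:ah\,tu:bf\,tv)\}$, a rescaled Segre surface cut out by $deah\,x_0x_3-cgbf\,x_1x_2=0$. With all four products nonzero this is the full smooth quadric, and it equals $\calQ$ precisely when $adeh=bcfg$. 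Hence in the $4$-dimensional parameter space $(a:b,\,c:d,\,e:f,\,g:h)$ the fibre is the single hypersurface $\{adeh=bcfg\}$, which is $3$-dimensional; together with the swapped family this gives $\dim\varphi^{-1}(\calQ)=3$.

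I expect the main obstacle to be the combinatorial bookkeeping of the second paragraph: one must correctly decide which of the two coordinate lines in each $\calQ\cap H_i$ lies in which ruling, and then invoke injectivity of $p\mapsto p\star R$ to conclude that the four intersection points collapse in pairs. Everything afterwards—the parametrisation and the rescaled-Segre computation—is routine, and the dimension count is immediate. As a consistency check one can verify that the $D_3$-orbit (\Cref{remark:stabilizer}) of any such pair is already $3$-dimensional and fills out the hypersurface $\{adeh=bcfg\}$, so that the generic fibre of $\varphi$ over $\calQ$ coincides with a single torus orbit.
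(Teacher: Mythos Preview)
Your argument is correct and takes a genuinely different route from the paper. The paper proceeds by brute-force elimination: it fixes an affine chart of $\bbG(1,3)\times\bbG(1,3)$ via $2\times 4$ matrices, substitutes a generic $p\star q$ into $x_0x_3-x_1x_2$, and reads off the vanishing of all coefficients in the auxiliary parameters $\lambda_i,\mu_j$; the resulting system of bilinear equations in eight unknowns is then observed to cut out a $3$-dimensional locus. Your approach instead exploits the isomorphism $h_{L,R}\colon L\times R\to\calQ$ from \Cref{lemma:hadamardproduct_smooth} to identify the two rulings, match them against the explicit coordinate-line decomposition of each $\calQ\cap H_i$, and deduce via injectivity of $p\mapsto p\star R$ that $L$ must be a transversal to $\ell_{01},\ell_{23}$ and $R$ a transversal to $\ell_{02},\ell_{13}$ (or the swap). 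This reduces the problem from eight parameters to four, after which the single relation $adeh=bcfg$ falls out of a two-line computation. The paper's calculation is shorter to write down and requires no geometric input beyond the definition, but it hides the structure of the fibre; your argument explains \emph{why} the fibre is $3$-dimensional, exhibits it concretely as a torus hypersurface, and makes the consistency with the $D_3$-action of \Cref{remark:stabilizer} transparent.
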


    Before proving the Claim, which is just a straightforward computation, we see how we conclude the proof from that. If the Claim holds true, then the generic fiber is exactly $3$-dimensional. Since $\dim (\bbG(1,3)\times\bbG(1,3)) = 8$, we deduce that $\dim(H) = 8 - 3 = 5$.
    
    By \Cref{lemma:hadamardproduct_smooth}, we know that $H$ is contained in the variety of quadrics that are tangent to the coordinate planes:
    \[
        Y = \overline{\{Q ~:~ Q \text{ is tangent to }H_i \text{ for any } i = 0,1,2,3\}}.
    \]
    If $A$ is the non-singular symmetric matrix defining $Q \in Y$, then $A^{-1}$ has null-diagonal: indeed, if $Q$ is tangent to the $H_i$'s then the dual quadric $Q^\vee$ passes through the coordinate points and it is well-known that $Q^\vee$ is defined by $A^{-1}$ \cite[Example I.2.1]{gelfand1994discriminants}. In other words, $Y$ is parametrized by the space of $4 \times 4$ non-singular symmetric matrices with null-diagonal, which is irreducible and $5$-dimensional through the map $A \mapsto A^{-1}$: in particular, $Y$ is $5$-dimensional and irreducible. Therefore, $H = Y$.
    
    \begin{proof}[Proof of Claim]
        Let $(L,R) \in \bbG(1,3)\times\bbG(1,3)$. Without loss of generality, we may assume that they are represented by the matrices
        \[
            L = \begin{bmatrix}
                1 & 0 & u_1 & v_1\\
                0 & 1 & u_2 & v_2
            \end{bmatrix}, \quad
            R = \begin{bmatrix}
                1 & 0 & z_1 & w_1\\
                0 & 1 & z_2 & w_2
            \end{bmatrix}.
        \]
        In other words, we consider the restriction of $\varphi$ to an affine chart of $\bbG(1,3)\times\bbG(1,3)$. Then, if $p \in L, q \in R$, we have that:
        \[
        p = (\lambda_1: \lambda_2: \lambda_1 u_1 + \lambda_2 u_2: \lambda_1 v_1 + \lambda_2 v_2), \ q = (\mu_1: \mu_2: \mu_1 z_1 + \mu_2 z_2: \mu_1 w_1 + \mu_2 w_2).    
        \]
        for some $\lambda_1, \lambda_2, \mu_1, \mu_2 \in \bbC$. Hence:
        \[
            p \star q = (\lambda_1\mu_1: \lambda_2\mu_2: (\lambda_1 u_1 + \lambda_2 u_2)(\mu_1 z_1 + \mu_2 z_2): (\lambda_1 v_1 + \lambda_2 v_2)(\mu_1 w_1 + \mu_2 w_2)).
        \]
        Now, since $L \star R = \calQ$, we have that the polynomial obtained by plugging in the equation 
        \begin{multline*}
            \lambda_1^2\mu_1^2(v_1w_1) + \lambda_1^2\mu_1\mu_2(v_1w_2) + \lambda_1\lambda_2\mu_1^2(v_2w_1) + \lambda_1\lambda_2\mu_2\mu_2(v_2w_2 - u_2z_2)\\
            - \lambda_1\lambda_2\mu_2^2(u_1z_2) - \lambda_2^2\mu_1\mu_2(u_2z_1) - \lambda_2^2\mu_2^2(u_2z_2) \in \bbC[u_1,u_2,v_1,v_2,w_1,w_2,z_1,z_2][\lambda_1,\lambda_2,\mu_1,\mu_2]
        \end{multline*}
        is identically zero, i.e., $v_1w_1 = v_1w_2 = v_2w_1 = v_2w_2 - u_2z_2 = u_1z_2 = u_2z_1 = u_2z_2 = 0$. This defines a $3$-dimensional subspace of $\bbG(1,3)\times\bbG(1,3)$.
    \end{proof}
    Thus the claim is proved and this concludes the proof of \Cref{theorem:quadraticsurfaces}.
\end{proof}

In \Cref{lemma:hadamardproduct_smooth}, we have seen that quadratic surfaces which are Hadamard products of lines are smooth. In the following example, we exhibit a quadratic cone which is Hadamard product of a line and a conic. We will say more about surfaces as Hadamard products of higher degree curves in \Cref{sec:higher_degrees}.

\begin{example}
    Let $L\subset \bbP^3$ be a generic line through $(1:0:0:0)$ and $C\subset \bbP^3$ be a generic (plane) conic through $(0:1:1:1)$. The result is a quadratic cone singular at $(1:0:0:0)$. Direct computation were done with the support of the algebra software Macaulay2 \cite{M2}, see \cite{datahadamardquadric} for the code.
\end{example}

In the following example we show that the Hadamard product of a pair of skew lines does not necessarily give a quadratic surface.

\begin{example}
    Let $L \subset \bbP^3$ be a line which does not intersect any coordinate line. Hence, $L^{\star 2}$ is a plane as in \Cref{rmk:hadamard_product_lines}. It follows that, for each $p \neq (1:1:1:1)$ with no zero coordinate, $L \star L_p = p \star L^{\star 2}$ is a plane, where $L_p = p \star L$. This gives a family of lines $L, R \subset \bbP^3$ such that $R = p \star L \neq L$ and $L \star R$ is a plane. Moreover, for a generic line $L \subset \bbP^3$, the family $X_L =\{p \star L:p \in \bbP^3, \ p \star L \in \bbG(1,3)\}$ is a 3-dimensional quasi-projective subvariety of $\bbG(1,3)$. Indeed, it is the image of the rational map $\varphi_L\colon \bbP^3 \dashrightarrow \bbG(1,3)$ defined by $\varphi_L(p) = p \star L$. Denoting by $(q_{12}:\cdots : q_{34})$ the Plücker coordinates of $L$, the locus $\overline{X_L}\subset \bbG(1,3)$ is cut out by the following equation in the Plücker coordinates $(p_{12}:\cdots:p_{34}) \in \bbG(1,3)$:
    \[
        q_{12}q_{34}p_{13}p_{24} - q_{13}q_{24}p_{12}p_{34} =0
    \]
    which we obtained by elimination. For an example of two incident lines giving a plane, see \cite[Example~8.2.3]{bocci2024hadamard} or \cite[Example 2.10]{bocci2018hilbert}.
\end{example}

\subsection{Coordinate loci: an identifiability result}
\label{sec:coordinateloci}
In view of \Cref{theorem:quadraticsurfaces}, in this section we focus on the singular points of the conics obtained by intersecting a quadratic surface with the coordinate hyperplanes.

\begin{remark}
\label{rmk:tangenthypersurfaces}
    If $L, R \subset \bbP^3$ are lines and $W = L \star R$ is a quadratic surface, then by \Cref{lemma:hadamardproduct_smooth} we know that $W$ is tangent to all the coordinate planes, hence $\text{Sing}(W \cap H_i)$ is a single point for all $i = 0, \ldots, 3$; we denote it by $O_i \in \bbP^3$.
\end{remark}

\begin{definition}
    Let $W \subset \bbP^3$ as in \Cref{rmk:tangenthypersurfaces}. The {\em singular coordinate locus} of $W \subset \bbP^3$ is defined as the set $\text{SCL}(W) = \{ O_0, \ldots,O_3\}$.
\end{definition}

We now see how such points identify the quadratic surface.

\begin{lemma}
\label{lemma:sclhas4points}
    Let $W \subset \bbP^3$ be a quadratic surface of the form $W = L \star R$, for some lines $L, R \subset \bbP^3$. Then, $|\text{SCL}(W)| = 4$.
\end{lemma}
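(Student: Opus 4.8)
The plan is to reduce the statement to the distinctness of four explicitly described points, and then to rule out any coincidence using the smoothness of $W$. By \Cref{lemma:hadamardproduct_smooth}, since $W = L\star R$ is a quadratic surface, $W$ is smooth and tangent to each of the four coordinate planes $H_0,H_1,H_2,H_3$; write $O_i$ for the contact point, i.e.\ the unique singular point of the reducible conic $W\cap H_i$. By \Cref{rmk:tangenthypersurfaces} we then have $\mathrm{SCL}(W)=\{O_0,O_1,O_2,O_3\}$, so it suffices to prove that these four points are pairwise distinct. It is also convenient to record a concrete description of them: setting $p_i = L\cap H_i$ and $r_i = R\cap H_i$, the two lines forming $W\cap H_i$ are $p_i\star R$ and $L\star r_i$, and their intersection point is $O_i = p_i\star r_i$.

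The cleanest way to establish distinctness is to exploit smoothness directly rather than the coordinates. Since $W$ is smooth, it carries a well-defined embedded tangent plane at each of its points, and the fact that $H_i$ is tangent to $W$ means precisely that this tangent plane at $O_i$ equals $H_i$. Suppose, for contradiction, that $O_i = O_j$ for some $i\neq j$. Then the unique tangent plane to $W$ at this common smooth point would coincide with both $H_i$ and $H_j$, forcing $H_i = H_j$; this contradicts the fact that distinct coordinate planes are distinct hyperplanes of $\bbP^3$. Hence $O_0,O_1,O_2,O_3$ are pairwise distinct and $|\mathrm{SCL}(W)| = 4$.

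I expect the only real obstacle to be precisely this step of excluding coincidences among the $O_i$, and I would flag that the naive computational route is where the difficulty hides. One can instead substitute into $O_i = p_i\star r_i$ a parametrization of $L$ and $R$ as in the proof of the Claim in \Cref{theorem:quadraticsurfaces}, read off the homogeneous coordinates of the four points, and check distinctness by hand; a coincidence $O_i = O_j$ then forces several of the bilinear products appearing in those coordinates to vanish simultaneously, and one must argue that such vanishing is incompatible with $W$ being a genuine (irreducible, two-dimensional, degree-two) quadric rather than a degenerate image. That casework is somewhat delicate, which is exactly why I would make the conceptual tangent-plane argument the main line of proof: it uses only the smoothness already granted by \Cref{lemma:hadamardproduct_smooth} and sidesteps the coordinate analysis entirely.
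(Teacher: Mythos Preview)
Your argument is correct, and it differs from the paper's. Both start from \Cref{lemma:hadamardproduct_smooth} and \Cref{rmk:tangenthypersurfaces} to reduce to showing that the $O_i$ are distinct, but the contradiction is obtained differently. The paper argues via the ruling structure of the smooth quadric: if $O_0=O_1$, the two lines of $W\cap H_0$ and the two lines of $W\cap H_1$ all pass through $O_0$, hence coincide with the unique pair of ruling lines through that point, giving $W\cap H_0=W\cap H_1$; this set then lies in the coordinate line $H_0\cap H_1$, forcing $W\cap H_0$ to be a double line and $W$ a cone, contradicting smoothness. Your route instead uses only that a smooth point of a surface has a unique embedded tangent plane: since $W\cap H_i$ is singular at $O_i$ and $W$ is smooth there, one has $T_{O_i}W=H_i$, so $O_i=O_j$ would force $H_i=H_j$. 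This is shorter and avoids any appeal to the double-ruling geometry of the quadric; the paper's approach, on the other hand, makes explicit what $W\cap H_i$ would have to look like under a coincidence, which feeds more directly into the later analysis of the reducible conics in \Cref{theorem:identifiability}. Your additional description $O_i=p_i\star r_i$ is accurate (and matches what the paper uses in the proof of \Cref{theorem:identifiability}) but is not needed for your main line.
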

\begin{proof}
    By \Cref{lemma:hadamardproduct_smooth}, we know that $W$ is smooth and tangent to all coordinate planes. As in \Cref{rmk:tangenthypersurfaces}, we have that $\text{SCL}(W) = \{ O_0, \ldots , O_3\}$. Suppose by contradiction that $|\text{SCL}(W)| < 4$. Without loss of generality, $O_0 = O_1$. Since $W\cap H_0$ and $W \cap H_1$ are singular conics with common singular point $O_0$ such that their irreducible components are the lines of the 2 rulings of $W$ passing through $O_0$, we get that $W \cap H_0 = W \cap H_1$. Now, $W \cap H_0 = W \cap H_1$ is 1-dimensional and contained in the coordinate line $H_0 \cap H_1$. Hence, $W \cap H_1 = W \cap H_0$ would be scheme-theoretically equal to the double coordinate line $H_0 \cap H_1$, which leads to a contradiction since $W$ is smooth quadric and in particular not a cone by \Cref{lemma:hadamardproduct_smooth}.
\end{proof}

Singular coordinate loci provide identifiability of quadratic surfaces which are Hadamard products of lines. 

\begin{lemma}
\label{lemma:identifiability}
    Given two sets of collinear points $\{p_0,\ldots,p_3\}$ and $\{q_0,\ldots,q_3\}$ in $\bbP^3$ with $p_i, q_i \in H_i \smallsetminus \cup_{j\neq i} H_j$, there is a unique quadratic surface $W \subset \bbP^3$ such that $W \cap H_i$ is singular at $p_i \star q_i$ for all $i = 0, \ldots, 3$. 
\end{lemma}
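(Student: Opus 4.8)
The plan is to reduce the statement to linear algebra about the symmetric matrix defining $W$. Write a quadric as $W = \{x^\top A x = 0\}$ for a nonzero symmetric $4\times 4$ matrix $A$, and set $\pi_i := p_i \star q_i \in H_i$. Since $\pi_i$ has vanishing $i$-th coordinate, the section $W\cap H_i$ is the plane conic whose matrix is obtained by deleting the $i$-th row and column of $A$, and a direct gradient computation shows that this conic is singular at $\pi_i$ if and only if $(A\pi_i)_\ell = 0$ for all $\ell \neq i$, that is, $A\pi_i \in \langle e_i\rangle$. Collecting the four points into the matrix $P = [\pi_0 \mid \pi_1 \mid \pi_2 \mid \pi_3]$, the requirement that $W\cap H_i$ be singular at $\pi_i$ for every $i$ becomes the single condition that $AP$ be a diagonal matrix.

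The key observation is that collinearity forces $P$ to be symmetric with zero diagonal. Writing $L = \langle a,b\rangle$ and $R = \langle c,d\rangle$, the point $p_i = L\cap H_i$ has coordinates $p_{i,j} = b_i a_j - a_i b_j$, and likewise $q_{i,j} = d_i c_j - c_i d_j$; both are antisymmetric in $(i,j)$, so $\pi_{i,j} = p_{i,j}q_{i,j}$ is \emph{symmetric} in $(i,j)$, giving $P^\top = P$, while $\pi_{i,i}=0$ and $\pi_{i,j}\neq 0$ for $i\neq j$ by the non-degeneracy hypothesis $p_i,q_i \in H_i \smallsetminus \bigcup_{j\neq i}H_j$. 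Granting that $P$ is invertible, the conclusion is then formal: if $A$ is symmetric and $AP = D$ is diagonal, then transposing gives $PA = D^\top = D$, hence $DP^{-1} = A = P^{-1}D$, so $D$ commutes with $P$; since the off-diagonal entries of $P$ are nonzero, $[D,P]_{jk}=P_{jk}(d_j-d_k)=0$ forces $D = cI$ to be scalar. Thus $A = cP^{-1}$ is uniquely determined up to scaling, and conversely $A = P^{-1}$ is symmetric (being the inverse of a symmetric matrix) with $AP = I$, so $W = \{x^\top P^{-1}x = 0\}$ is the unique quadric with the required singularities; note $\pi_i = Pe_i$, so indeed $A\pi_i = e_i$. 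This simultaneously recovers \Cref{theorem:quadraticsurfaces}, since $W$ is defined by the inverse of the symmetric null-diagonal matrix $P$.

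The main obstacle is therefore the invertibility of $P$, which is where the geometry must enter. I would establish it through the factorization $P = U\Sigma U^\top$, where $U = [\,a\star c \mid b\star d \mid a\star d \mid b\star c\,]$ has as columns the coordinatewise products of the spanning vectors and $\Sigma$ is the fixed matrix with $\det\Sigma = 1$ obtained by expanding $\pi_{i,j} = (b_i a_j - a_i b_j)(d_i c_j - c_i d_j)$ and grouping the four monomials; this yields $\det P = (\det U)^2$. The columns of $U$ are precisely the affine representatives of the four Hadamard products $a\star c,\ b\star d,\ a\star d,\ b\star c$, i.e.\ the images under the Hadamard map of the four ``corners'' of $L\times R$, so $\det U \neq 0$ exactly when these four points are in linear general position. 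They lie on the smooth quadric $L\star R$, and via the isomorphism $h_{L,R}\colon L\times R \xrightarrow{\sim} W$ of \Cref{lemma:hadamardproduct_smooth} (under which $W$ is a Segre quadric) these four corner points correspond to the four coordinate points, which are independent; the distinctness of the resulting $\pi_i$ is consistent with $|\mathrm{SCL}(W)|=4$ from \Cref{lemma:sclhas4points}. I expect verifying this non-degeneracy — and identifying precisely which assumptions on the two collinear configurations (in particular $L\neq R$) guarantee $\det U\neq 0$ — to be the delicate point, whereas the reduction to $A \propto P^{-1}$ is then immediate.
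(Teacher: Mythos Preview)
Your approach is genuinely different from the paper's and considerably more conceptual: the paper parametrizes $p_0,p_1,q_0,q_1$ explicitly, writes down the $12\times 10$ linear system for the coefficients of $W$, and uses a Macaulay2 computation of the primary decomposition of the $9\times 9$ minor ideal to see that the rank is exactly $9$ whenever all the affine parameters are nonzero. You instead reformulate the conditions as $AP$ diagonal with $P=[\pi_0|\cdots|\pi_3]$, observe $P$ is symmetric with null diagonal, and when $P$ is invertible obtain the clean answer $A\propto P^{-1}$ via a commutation argument. This not only proves uniqueness but explains \emph{why} the quadric is the inverse of a null-diagonal symmetric matrix, which the paper only obtains a posteriori.

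There is, however, a real gap at the invertibility step. You propose to deduce $\det U\neq 0$ from \Cref{lemma:hadamardproduct_smooth}, but that lemma has as hypothesis that $L\star R$ is a quadratic surface, which is \emph{not} implied by the hypotheses of \Cref{lemma:identifiability}. The case $L=R$ already shows the problem (then $u_3=u_4=a\star b$ and $\det U=0$), and you anticipate this; but $L\neq R$ is still not enough: if $R=p\star L$ for some $p$ with nonzero coordinates (cf.\ the paper's Example~3.5) then every $\pi_i=p\star p_i^{\star 2}$ lies in the plane $p\star L^{\star 2}$, so $\mathrm{rk}\,P\leq 3$ and your formula $A=P^{-1}$ is unavailable. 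The lemma is nonetheless true in these degenerate cases --- the paper's rank computation covers them uniformly, and one checks directly that the kernel vector $v$ of $P$ has all coordinates nonzero, forcing $D=0$ and $A\propto vv^{\top}$, so the unique ``quadric'' is the double plane $(v^\top x)^2=0$. Your argument as written is complete precisely when $L\star R$ is a quadratic surface, which is exactly the hypothesis of \Cref{theorem:identifiability}; so it suffices for that application, but to prove \Cref{lemma:identifiability} in the stated generality you would need a separate treatment of the rank-$3$ case.
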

\begin{proof}
    We can generically write $p_0 = (0:1:a_1:a_2), \ p_1 = (1:0:a_3:a_4), \ q_0 = (0:1:b_1:b_2), \ q_1 = (1:0:b_3:b_4)$, with $\prod_ia_ib_i \neq 0$. Then, $p_2 = \langle p_0,p_1 \rangle \cap H_2 = (-a_1:a_3:0:a_2a_3-a_1a_4)$ and $p_3 = \langle p_0,p_1 \rangle \cap H_3 = (-a_2:a_4:-a_2a_3+a_1a_4:0)$ (analogously for $q_2,q_3$). Let $W = \{ F = 0\}$ be a quadratic surface with $F = c_0x_0^2 + \cdots + c_9x_3^2$ with respect to the lexicographical order of the homogeneous monomials. Imposing the singularity of $W_i = W\cap H_i = \{ F_i = 0\}$ at $O_i = p_i \star q_i$ for every $i = 0,\ldots, 3$ amounts to the system of linear equations in $\bbC[a_1,\ldots,a_4,b_1, \ldots, b_4][c_0, \ldots, c_9]$ given by $\partial F_i/\partial x_j(O_i) = 0$ for all $i, j = 0, \ldots, 3$ such that $j \neq i$. By direct computation with the support of the algebra software Macaulay2 \cite{M2}, the associated $12 \times 10$ matrix is never full-rank. The radical ideal of the $9\times 9$ minors has primary decomposition
    \begin{align*}
        (a_1,a_2) & \cap (a_1,a_3) \cap (a_2,a_4) \cap (a_3,a_4) \cap (b_1,b_2) \cap (b_1,b_3) \cap (b_2,b_4) \cap (b_3,b_4) \\
        & \cap (a_4,b_1) \cap (a_3, b_2) \cap (a_2,b_3) \cap (a_1,b_4) \cap (a_1,a_4,b_2,b_3) \cap (a_2,a_3,b_1,b_4).
    \end{align*}
    The code and the printed version of the Jupyter Notebook with all computations are available at \cite{datahadamardquadric}. 
    Since, as we already observed, by assumption $\prod_i a_ib_i \neq 0$, we deduce that the rank of the matrix is always $9$. In other words, for any $a_i$'s and $b_i$'s such that $\prod_i a_ib_i \neq 0$ the linear system has a unique solution up to scalars and this concludes the proof.
\end{proof}

\begin{theorem}
\label{theorem:identifiability}
    A quadratic surface which is the Hadamard product of two lines is uniquely determined by its singular coordinate locus, i.e., if $W = L \star R$ and $W' = L' \star R'$ such that $\text{SCL}(W)=\text{SCL}(W')$ then $W = W'$.
\end{theorem}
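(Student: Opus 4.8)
The plan is to derive the statement directly from the structural results already in hand, reducing everything to the uniqueness in \Cref{lemma:identifiability}. First I would record what the hypotheses give us about the two surfaces. By \Cref{lemma:hadamardproduct_smooth} both $W$ and $W'$ are smooth quadrics tangent to all four coordinate planes, so by \Cref{rmk:tangenthypersurfaces} together with \Cref{lemma:sclhas4points} their singular coordinate loci consist of exactly four points,
\[
    \text{SCL}(W) = \{O_0,\ldots,O_3\}, \qquad \text{SCL}(W') = \{O_0',\ldots,O_3'\},
\]
where $O_i = \text{Sing}(W\cap H_i)$ and $O_i' = \text{Sing}(W'\cap H_i)$ lie on $H_i$. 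The whole proof then amounts to showing $O_i = O_i'$ for every $i$ and feeding this into \Cref{lemma:identifiability}.

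The second step is a combinatorial matching of the two loci. Since $\text{SCL}(W) = \text{SCL}(W')$ as sets, there is a permutation $\sigma$ of $\{0,1,2,3\}$ with $O_i' = O_{\sigma(i)}$. As $O_i' \in H_i$ while $O_{\sigma(i)} \in H_{\sigma(i)}$, whenever $\sigma(i)\neq i$ the point $O_{\sigma(i)}$ would be forced onto the coordinate line $H_i \cap H_{\sigma(i)}$. Hence $\sigma$ is the identity — and therefore $O_i = O_i'$ for all $i$ — as soon as one knows that each singular point lies on a single coordinate plane, i.e. $O_k \in H_k \smallsetminus \bigcup_{j\neq k} H_j$.

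For the concluding step I would write $O_k = p_k \star q_k$ with $p_k = L\cap H_k$ and $q_k = R\cap H_k$, so that $\{p_0,\ldots,p_3\}$ is collinear on $L$ and $\{q_0,\ldots,q_3\}$ is collinear on $R$. Since $(O_k)_j = (p_k)_j(q_k)_j$, the incidence $O_k \in H_k\smallsetminus\bigcup_{j\neq k}H_j$ is exactly equivalent to $p_k,q_k \in H_k\smallsetminus\bigcup_{j\neq k}H_j$, which is precisely the hypothesis of \Cref{lemma:identifiability}. Granting $O_i = O_i'$, both $W$ and $W'$ are then quadrics whose section by $H_i$ is singular at the same point $O_i = p_i\star q_i$; as the linear system on the quadric coefficients appearing in \Cref{lemma:identifiability} depends only on the points $O_i$ and has rank $9$ under this genericity, its solution is unique, whence $W = W'$.

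The main obstacle is establishing the genericity input $O_k\in H_k\smallsetminus\bigcup_{j\neq k}H_j$, equivalently that neither $L$ nor $R$ meets a coordinate line. I would attack this through smoothness: if $p_k \in H_k\cap H_j$, then $p_k\star R$ is a ruling of $W$ confined to the coordinate line $H_k\cap H_j$, and since $H_j$ carries a unique ruling of each family (by the isomorphism $h_{L,R}$ of \Cref{lemma:hadamardproduct_smooth}) this forces $p_k=p_j$, i.e. $L$ actually passes through $H_k\cap H_j$. Such borderline configurations are not immediately contradictory, and there $O_k$ and $O_j$ both lie on $H_k\cap H_j$, so the clean hyperplane-matching of the second step degenerates. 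Disposing of these cases — either by showing that the swapped matching $\sigma\neq\mathrm{id}$ cannot be realized by a second Hadamard-product quadric, or by a limiting argument from the generic stratum — is the genuinely delicate point; on the generic stratum, where every factor line avoids all coordinate lines, the matching is automatic and the conclusion follows at once from \Cref{lemma:identifiability}.
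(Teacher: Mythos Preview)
Your route is the paper's route: write $O_i = p_i\star q_i$ with $p_i = L\cap H_i$ and $q_i = R\cap H_i$ (using that $h_{L,R}$ is a morphism, \Cref{lemma:hadamardproduct_smooth}), verify that these two collinear quadruples meet the hypotheses of \Cref{lemma:identifiability}, and conclude. The permutation-matching you spell out is left implicit in the paper, since once each $O_i$ lies on a unique coordinate plane the identification $O_i = O_i'$ is automatic.

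The one divergence is at the step you label the ``main obstacle'', namely $O_i \in H_i\smallsetminus\bigcup_{j\neq i}H_j$ (equivalently $p_i,q_i\in H_i\smallsetminus\bigcup_{j\neq i}H_j$). The paper dispatches this in a single clause by invoking \Cref{lemma:sclhas4points}, so the ingredient missing from your write-up is precisely that citation. Your hesitation is not groundless, however: \Cref{lemma:sclhas4points} literally asserts only that the four $O_i$ are distinct, and the borderline configuration you worry about---$L$ meeting a coordinate line $H_i\cap H_j$, so that $p_i=p_j$---does occur with $W=L\star R$ a smooth quadric and all four $O_k$ distinct, yet with $O_i,O_j\in H_i\cap H_j$. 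So the paper's inference from \Cref{lemma:sclhas4points} to the genericity condition is not a purely formal one; the paper simply does not pause on the point you isolate, while you flag it but do not close it.
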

\begin{proof}
    Let $W = L \star R$ be a quadratic surface. Consider the reducible conics $W_i = W \cap H_i = \ell_i \cup r_i$ for $i = 0,\ldots,3$ and $O_i = \ell_i \cap r_i$ their centers. Recall that: $O_i = p_i \star q_i$ with $p_i = L \cap H_i$ and $q_i = R \cap H_i$ since $h_{L,R}$ is a morphism, by \Cref{lemma:hadamardproduct_smooth}, and $O_i \in H_i \smallsetminus \cup_{j \neq i} H_j$ by \Cref{lemma:sclhas4points}. In other words, the points $\{p_0,\ldots,p_3\}$ and $\{q_0,\ldots,q_3\}$ satisfy the assumption of \Cref{lemma:identifiability} and this concludes the proof.
\end{proof}

\begin{remark}
\label{rmk:coplanarity}
    For sake of completeness, we observe that the four points giving the singular coordinate locus of a quadratic surface which is Hadamard product of two lines might be coplanar. Indeed, the construction described in the proof of \Cref{theorem:identifiability} works also under the additional Zariski-closed assumption on the $a_i$'s and $b_i$'s obtained by imposing the coplanarity of the $O_i$'s.
\end{remark}

\begin{example}
    We construct families of Hadamard products of lines with coplanar singular coordinate points. Let $a, b \in \bbC \smallsetminus\{0\}$ and define:
    \[
        P_0 = (0:1:1:2), \quad P_1 = (1:0:1:1), \quad P_2 = (1:-1:0:-1), \quad P_3 = (2:-1:1:0),
    \]
    \[
        Q_0 = (0:1:1:1), \ Q_{1ab} = (1:0:a:b), \ Q_{2ab} = (1:-a:0:b-a), \ Q_{3ab} = (1:-b:a-b:0).
    \]
    Set $L = \langle P_0, P_1\rangle$, $R_{ab} = \langle Q_0, Q_{1ab} \rangle$ and $W_{ab} = L \star R_{ab}$. It is easy to check that $P_2, P_3 \in L$ and $Q_{2ab}, Q_{3ab} \in R_{ab}$. Moreover, consider the points:
    \[
        O_0 = (0:1:1:2), \ O_{1ab} = (1:0:a:b), \ O_{2ab} = (1:a:0:a-b), \ O_{3ab} = (2:b:a-b:0),
    \]
    where $O_0 = P_0 \star Q_0$ and $O_{iab} = P_{i} \star Q_{iab}$ for all $i = 1,2,3$. Then, $\text{SCL}(W_{ab}) = \{O_0, O_{1ab}, O_{2ab}, O_{3ab}\}$. As mentioned in \Cref{rmk:coplanarity}, the singular coordinate points can be coplanar by imposing the condition that
    \[
        \Delta_{ab} = \det\begin{bmatrix}
            0 & 1 & 1 & 2\\
            1 & 0 & a & b\\
            1 & a & 0 & a-b\\
            2 & b & b -a & 0
        \end{bmatrix} = 0.
    \]
    That is $\Delta_{ab} = a(3a-2b) = 0$. As described in the proof of \Cref{theorem:identifiability}, for any $b \neq 0$, we have a smooth quadratic surface:
    \[
        W_{2b/3,b}\colon \qquad b^{2} x_{0}^{2} - 4 b x_{0} x_{1} - 2 b x_{0} x_{3} + 3 x_{1}^{2} - 6 x_{1} x_{2} - 9 x_{2}^{2} + 12 x_{2} x_{3} - 3 x_{3}^{2} = 0.
    \]
\end{example}

In view of \Cref{theorem:quadraticsurfaces,theorem:identifiability}, it is natural to ask the following.

\begin{question}
    Given a quadratic surface $W \subset \bbP^3$ which is guaranteed to be Hadamard product of two lines, how to recover two such lines? Recall that they are not uniquely determined by \Cref{remark:stabilizer}.
\end{question}

%%%%%%%%%%%%%

\section{Surfaces as Hadamard products of higher degree curves}
\label{sec:higher_degrees}

In the following, all the varieties involved in the Hadamard product are assumed to be irreducible.

\begin{notation}
    For every subvarieties $X, Y \subset \bbP^N$, denote by $h_{X,Y}\colon X \times Y \dashrightarrow X \star Y$ the restriction of the Hadamard map $h \colon \bbP^N \times \bbP^N \dashrightarrow\bbP^N$ to $X \times Y$.
\end{notation}

\begin{remark}
\label{remark:morphismcurves}
    Let $C_1, C_2\subset \bbP^3$ be curves. We describe when the Hadamard map $h_{C_1, C_2}$ is a morphism. It is sufficient that $p\star q$ is defined for all $p \in C_1$ and $q \in C_2$. Since both $C_1$ and $C_2$ meet all coordinate planes, i.e.\ contain a point with its $i$-th coordinate equal to $0$ for all $i=0,\ldots,3$, a necessary condition is that neither $C_1$ nor $C_2$ contain one of the 4 coordinate points. If this assumption is satisfied, a sufficient condition is that at least one among $C_1$ and $C_2$ does not meet one of the $6$ coordinate lines in $\bbP^3$. 
    If, for instance, $C_1$ meets the coordinate line $H_i \cap H_j$, then $C_2$ should not meet the orthogonal coordinate line.
\end{remark}

\begin{remark}
\label{remark:notisomorphism}
    Let $X \subset \bbP^N$ be a variety of dimension $0<d<N$ and let $Y \subset \bbP^N$ be a variety of codimension $d$. Since $X \times Y$ is not isomorphic to $\bbP^N$, the Hadamard map $h_{X,Y}$ is not an isomorphism.
\end{remark}

\begin{proposition}
\label{prop:isoiffquadricsurface}
    Let $X \subset \bbP^N$ be a variety of dimension $1\leq d < N/2$ and let $Y \subset \bbP^N$ be a variety of codimension $d+1$. Then, the Hadamard map $h_{X,Y}$ is an isomorphism if and only if $X$ and $Y$ are both lines in $\bbP^3$ and $W = X \star Y$ is a quadratic surface.
\end{proposition}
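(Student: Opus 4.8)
The plan is to organize everything around the dimension count $\dim(X\times Y)=d+(N-d-1)=N-1$: if $h_{X,Y}$ is an isomorphism, then its image $W=X\star Y$ has dimension $N-1$, so $W$ is a hypersurface and $W\cong X\times Y$. The backward implication is then immediate, since the argument inside the proof of \Cref{lemma:hadamardproduct_smooth} already shows that when $X,Y$ are lines in $\bbP^3$ and $W$ is a quadric, $h_{X,Y}$ is a finite birational morphism onto the normal surface $W$, hence an isomorphism by Zariski's Main Theorem. So the content is the forward implication, which I would split according to whether $N\geq 4$ or $N=3$; note that $N=3$ forces $d=1$, since $1\le d<N/2$.

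For $N\ge 4$ I would derive a contradiction from the product structure by a Lefschetz-type argument on second cohomology, chosen so as \emph{not} to require $W$ (equivalently $X$ and $Y$) to be smooth. Viewing $W$ as an ample divisor in $\bbP^N$ and applying the Lefschetz hyperplane theorem in the form valid for singular hyperplane sections of a smooth ambient variety, the restriction $H^2(\bbP^N;\mathbb{Q})\to H^2(W;\mathbb{Q})$ is an isomorphism whenever $2<\dim W=N-1$, i.e.\ $N\ge 4$; hence $b_2(W)=b_2(\bbP^N)=1$. On the other hand, for any positive-dimensional irreducible projective variety the hyperplane class is nonzero in $H^2$ (its top self-intersection is the degree), so $b_2(X),b_2(Y)\ge 1$, and the topological Künneth formula gives $b_2(X\times Y)=b_2(X)+b_1(X)b_1(Y)+b_2(Y)\ge 2$. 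Since $W\cong X\times Y$, this contradicts $b_2(W)=1$ and rules out every $N\ge 4$.

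For $N=3$ we have $d=1$, so $X=C_1$ and $Y=C_2$ are curves and $W\cong C_1\times C_2$ is a surface in $\bbP^3$. Here I would again avoid smoothness hypotheses by using that a hypersurface is Gorenstein. From $0\to\calO_{\bbP^3}(-\deg W)\to\calO_{\bbP^3}\to\calO_W\to 0$ one reads off $h^1(\calO_W)=0$, and Künneth for coherent cohomology gives $h^1(\calO_{C_1})+h^1(\calO_{C_2})=h^1(\calO_{C_1\times C_2})=0$. Thus both curves have arithmetic genus $0$, forcing $C_1\cong C_2\cong\bbP^1$ and in particular $W\cong\bbP^1\times\bbP^1$, which is smooth. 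Writing $\calO_W(1)=\calO(a,b)$ with $a,b\ge 1$ and using adjunction for the Gorenstein hypersurface $W$, namely $\omega_W=\calO_W(\deg W-4)=\calO((\deg W-4)a,(\deg W-4)b)$, together with $\omega_{\bbP^1\times\bbP^1}=\calO(-2,-2)$, forces $a=b=1$ and $\deg W=2$. Hence $W$ is a quadric and each $C_i$, embedded by $\calO_W(1)|_{C_i}=\calO_{\bbP^1}(1)$, is a line, which completes the forward implication.

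The step I expect to be most delicate is ensuring the cohomological inputs are applied in a form that tolerates singular $X,Y$. The Picard-group version of Grothendieck--Lefschetz would give Picard rank $1$ for $N\ge4$ directly, but it needs $W$ smooth, whereas a priori $W$ can be singular when $C_1$ or $C_2$ is. The resolution, as above, is to argue instead with the singular second Betti number, where the Lefschetz hyperplane theorem still holds for singular ample divisors, and with $h^1(\calO_W)$ and adjunction, which hold for an arbitrary hypersurface by the Gorenstein property; smoothness is then \emph{deduced} in the $N=3$ case rather than assumed.
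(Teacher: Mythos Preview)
Your argument is essentially correct but takes a much heavier route than the paper. The paper's proof is a single $H^0$ count: since the Hadamard map is given by bidegree-$(1,1)$ forms, the isomorphism $h_{X,Y}$ identifies $h_{X,Y}^*\calO_W(1)$ with $\calO_X(1)\boxtimes\calO_Y(1)$, and hence induces an isomorphism
\[
H^0(W,\calO_W(1))\;\cong\;H^0(X,\calO_X(1))\otimes H^0(Y,\calO_Y(1)).
\]
Since $W$ is a hypersurface of degree $\ge 2$ one has $h^0(\calO_W(1))=N+1$, while $h^0(\calO_X(1))\ge d+1$ and $h^0(\calO_Y(1))\ge N-d$, with equality exactly when $X,Y$ are linear. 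Thus $N+1=(d+1)+(N-d)\ge (d+1)(N-d)$ with both factors $\ge 2$, which forces $d+1=N-d=2$, i.e.\ $d=1$, $N=3$, and equality forces $X,Y$ to be lines. Everything falls out of one K\"unneth identity for $H^0$ and an arithmetic inequality; no Lefschetz, no adjunction, no case split on $N$. What your approach buys is independence from the specific bidegree of the map---your Lefschetz/K\"unneth-on-$b_2$ argument for $N\ge 4$ and your $h^1(\calO_W)=0$/adjunction argument for $N=3$ would obstruct \emph{any} isomorphism $X\times Y\cong W$ with $W\subset\bbP^N$ a hypersurface, not just one coming from a bilinear map---whereas the paper's argument exploits exactly that bilinearity to pin down $N$, $d$, and the degrees of $X,Y$ simultaneously.

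One small gap in your $N=3$ step: from $(\deg W-4)a=(\deg W-4)b=-2$ with $a,b\ge 1$ you conclude $a=b=1$, $\deg W=2$, but $a=b=2$, $\deg W=3$ also solves these equations. You need one more constraint to eliminate it, e.g.\ $\deg W=c_1(\calO_W(1))^2=2ab$, or $h^0(\calO_W(1))=4\neq 9=h^0(\calO_{\bbP^1\times\bbP^1}(2,2))$.
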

\begin{proof}
    Suppose that $h_{X,Y}$ is an isomorphism. Then, $\dim W = \dim X + \dim Y = d + N - (d +1) = N - 1$ and hence $W$ is an hypersurface in $\bbP^N$ isomorphic to $X \times Y$, from which $\deg(W) = \deg(X) + \deg(Y) > 1$. Thus $\dim H^0(W,\calO_W(1)) = N+1$. The Hadamard map $h_{X,Y}$ induces an isomorphism in cohomology $\eta\colon V \overset{\sim}{\to} V_1\otimes V_2$, where $V = H^0(W,\calO_W(1))$, $V_1 \otimes V_2 = H^0(X,\calO_X(1)) \otimes H^0(Y, \calO_Y(1)) = H^0(X \times Y, \calO_{X \times Y}(1,1))$. Since $X$ and $Y$ are embedded in projective space, $\dim V_1 \geq d+1$ and $\dim V_2 \geq N - d$ with equality if and only if $X$ and $Y$ are linear spaces. By assumption, $2 \leq d+1 \leq N-d$. Since $V = V_1 \otimes V_2$, $N + 1 = (N-d)+ (d+1) \geq (d+1)(N-d)$, which in turn implies that $d = 1$, $N = 3$ and $\text{codim}(Y) = d+1 = 2$.
    The opposite direction is given by \Cref{lemma:hadamardproduct_smooth}.
\end{proof}

\begin{proposition}
\label{prop:higherdegreesurface}
    Let $L \subset \bbP^3$ be a line and $C \subset \bbP^3$ be a curve. If $W = L \star C$ is a surface of degree $\deg(W) \geq 3$, then $W$ is a cone or $\dim \text{Sing}(W) = 1$.
\end{proposition}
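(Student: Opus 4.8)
The plan is to exploit that $W$ is a \emph{ruled} surface and then invoke the classification of ruled surfaces in $\bbP^3$. First I would record the ruling: for a general $q\in C$ the point $q$ has no vanishing coordinate, so $q\star L$ is the image of $L$ under an invertible diagonal automorphism, hence a line $\ell_q\subset W$. Since $W=\overline{\bigcup_{q\in C}(q\star L)}$ is two-dimensional, the assignment $q\mapsto\ell_q$ defines a nonconstant map $C\to\bbG(1,3)$ whose image is an irreducible curve $\Gamma$, so $W$ is covered by the one-parameter family of lines $\{\ell_q\}_{q\in C}$. I would also note that, by \Cref{prop:isoiffquadricsurface}, since $\deg W\geq 3>2$ the Hadamard map $h_{L,C}$ is not an isomorphism, which already signals that $W$ must be singular.

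Next I would set up a geometrically ruled model. Let $\tilde\Gamma$ be the normalization of $\Gamma$ and let $S'\subset\tilde\Gamma\times\bbP^3$ be the universal line, a $\bbP^1$-bundle over $\tilde\Gamma$, hence a smooth projective ruled surface. The projection $\rho\colon S'\to\bbP^3$ has image $W$, restricts to an isomorphism from each fiber onto the corresponding line $\ell$, and sends the pairwise disjoint fibers of $S'\to\tilde\Gamma$ to the members of the family. I expect $\rho$ to be birational onto $W$, i.e.\ that a general point of $W$ lies on a unique member; if this failed, then $k:=\deg\rho\geq 2$ would mean $k$ members of a \emph{single} irreducible family pass through a general point, a configuration I would rule out separately since it forces $W$ to be a quadric.

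Assume now, for contradiction, that $W$ is not a cone and that $\dim\text{Sing}(W)\neq 1$, i.e.\ $\dim\text{Sing}(W)\leq 0$. Then $W$ is a normal hypersurface (Serre's criterion, as in the proof of \Cref{lemma:hadamardproduct_smooth}), so the birational morphism $\rho\colon S'\to W$ from the smooth surface $S'$ onto the normal surface $W$ is, by Zariski's Main Theorem, an isomorphism away from the finitely many points of $W$ that are images of contracted curves; call this finite set $\Sigma\supseteq\text{Sing}(W)$. Since two distinct members $\ell_q\neq\ell_{q'}$ are images of two \emph{disjoint} fibers of $S'\to\tilde\Gamma$, any common point of $\ell_q$ and $\ell_{q'}$ forces non-injectivity of $\rho$ and hence lies in $\Sigma$; that is, distinct members meet only inside the finite set $\Sigma$. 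This yields a dichotomy: if some point of $\Sigma$ lies on infinitely many members then, the family being irreducible and one-dimensional, it lies on \emph{all} members, so $W$ is a cone, a contradiction; otherwise only finitely many members meet $\Sigma$, so a general member $\ell_q$ is disjoint from every other member, whence $W$ is the union of a one-parameter family of pairwise disjoint lines. It is classical (via reflexivity on the Klein quadric, or the classification of smooth ruled surfaces in $\bbP^3$ as planes and quadrics) that such a surface is a smooth quadric, forcing $\deg W=2$ and contradicting $\deg W\geq 3$. Either branch is absurd, so $\dim\text{Sing}(W)=1$.

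I expect the main obstacle to lie in the two places where I quietly pass to a birational ruled model: showing that $\rho$ (equivalently $h_{L,C}$) is generically injective, and the classical input that a one-parameter family of pairwise disjoint lines in $\bbP^3$ sweeps out only a quadric. Both are standard, but they are exactly where care is needed — an analysis of the fibers of $h_{L,C}$ through the Hadamard inverse of $L$, and the Klein-quadric/reflexivity computation — whereas the remainder is bookkeeping with normalization and Zariski's Main Theorem.
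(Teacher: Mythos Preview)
Your outline is broadly correct, but it is considerably more elaborate than the paper's argument and defers precisely the decisive step.

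The paper, after observing (as you do) that $W$ carries a one-parameter family of lines $\{L\star q\}$, proceeds directly: assuming $W$ is not a cone and has at most finitely many singular points, it takes a line $\ell$ of the ruling contained in the smooth part of $W$. Since $\ell$ moves in the family, its normal bundle in $W$ has non-negative degree; adjunction for the hypersurface $W\subset\bbP^3$ gives $\omega_W\cong\calO_W(\deg W-4)$, and adjunction for $\ell\subset W$ then yields $-2=\deg\omega_\ell\geq\deg W-4$, contradicting $\deg W\geq 3$. (The existence of such an $\ell$ is clear: if every member met the finite set $\text{Sing}(W)$, some singular point would lie on infinitely many, hence all, members of the irreducible family, and $W$ would be a cone.)

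Your two acknowledged gaps are real and not as light as you suggest. The claim that $\deg\rho\geq 2$ forces $W$ to be a quadric is asserted without argument, and it is not obvious: having several lines of one irreducible family through a general point does not by itself bound the degree. More importantly, birationality of $\rho$ is unnecessary for the conclusion---the adjunction step needs only a single line in the smooth locus that moves, not control over how many members pass through a general point. Likewise, the ``classical input'' that a one-parameter family of pairwise disjoint lines in $\bbP^3$ sweeps out only a smooth quadric is itself most cleanly proved by exactly this adjunction computation (equivalently, by the classification of smooth surfaces in $\bbP^3$ containing infinitely many lines). So after the detour through $S'$, Zariski's Main Theorem, and the dichotomy on $\Sigma$, your argument still requires the numerical core that the paper invokes at the outset; once you insert it, the surrounding scaffolding becomes superfluous.
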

\begin{proof}
    Since $W$ is a high-degree surface, $C$ is not contained in $H_0$, hence $U =C \cap (\bbP^3 \smallsetminus H_0)$ is dense in $C$. In particular, $W$ contains the infinite family of lines $\{ L \star q : q \in U\}$. Hence, either $W$ is a cone or a projective bundle over a smooth curve \cite[Corollary 10.4.5]{Dolgachev_2012}. In the latter case, $W$ is singular along a curve. Indeed, assume that $W$ has at most finitely many singular points. Since $W$ is not a cone, there is a line $\ell$ of the ruling contained in the smooth part of $W$. Since $\ell$ is an element of the ruling, the normal bundle of $\ell$ in $W$ has non-negative degree. From the adjunction formula for $W\subset\bbP^3$, we get that $\omega_W \cong \calO_W(\deg W - 4)$. From the adjunction formula for $\ell \subset W$, $\deg \omega_\ell \geq -1$, a contradiction.
\end{proof}

\begin{corollary}
\label{cor:non-trasversal}
    Let $L \subset \bbP^3$ be a line and $C \subset \bbP^3$ be a curve. If $W = L \star C$ is a surface of degree $\deg(W) \geq 3$ and the Hadamard map $h_{L,C}$ is a morphism, then the intersection $W \cap H_i$ is non-transversal for each $i = 0, \ldots , 3$. In particular, $W$ is not a cone.
\end{corollary}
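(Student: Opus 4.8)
The plan is to produce, for each $i$, a line of $W$ contained in the coordinate plane $H_i$, and to extract both conclusions of the corollary from these lines. Since $W$ is a surface of degree $\geq 3$ it is not contained in any $H_i$, so $C\not\subseteq H_i$ and $C\cap H_i$ is a nonempty finite set; fix $q_0\in C\cap H_i$. Because $q_0$ has vanishing $i$-th coordinate, so does every point of $L\star q_0$, hence $L\star q_0\subseteq H_i$; and since $h_{L,C}$ is a morphism, $L\star q_0=h_{L,C}(L\times\{q_0\})\subseteq W$. The one thing to verify is that $\ell_i:=L\star q_0$ is a genuine line and not a single point, and this is exactly where the morphism hypothesis is essential: the fibre $L\times\{q_0\}$ of the projection $L\times C\to C$ has self-intersection $0$, so by Grauert's criterion it cannot be contracted by $h_{L,C}$, precisely as in the proof of \Cref{lemma:hadamardproduct_smooth} (if $C$ is singular one first passes to its normalization and works on the smooth ruled surface). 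Alternatively, $L\star q_0$ collapses to a point only if $L$ meets the kernel of the diagonal map $x\mapsto q_0\star x$, which for $q_0$ with a single vanishing coordinate is the $i$-th coordinate point, and the latter does not lie on $L$ by the necessary condition recorded in \Cref{remark:morphismcurves}. Thus $\ell_i\subseteq W\cap H_i$ is a line for each $i=0,\dots,3$.

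With the lines in hand, the non-transversality is immediate. Indeed, $W\cap H_i$ is a plane curve in $H_i\cong\bbP^2$ of degree $\deg W\geq 3$ that contains the line $\ell_i$; since a line has degree $1<\deg W$, it is a \emph{proper} component, so $W\cap H_i$ is reducible and therefore singular (its components must meet in $\bbP^2$). A singular plane section means precisely that the intersection of $W$ with $H_i$ is non-transversal, because a transversal intersection would be a smooth, hence irreducible, plane curve, which in degree $\geq 3$ cannot contain a line. (The same conclusion can be routed through \Cref{prop:higherdegreesurface}: once $W$ is known not to be a cone, $\dim\text{Sing}(W)=1$, and the singular curve meets every $H_i$, producing a singular point of $W$ on each coordinate plane.)

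For the final assertion, suppose $W$ were a cone with vertex $v$. As $W$ is irreducible by \Cref{rmk:hadamardirreducible} and of degree $\geq 2$, every line contained in $W$ must pass through $v$: otherwise, for a line $\ell\subseteq W$ with $v\notin\ell$, the plane spanned by $v$ and $\ell$ would be swept out by the lines $\overline{vx}$ with $x\in\ell$, all contained in $W$, forcing this plane to lie on $W$ and contradicting irreducibility. Applying this to the lines $\ell_i$ of the first step gives $v\in\ell_i\subseteq H_i$ for all $i$, whence $v\in H_0\cap H_1\cap H_2\cap H_3=\emptyset$ in $\bbP^3$, a contradiction. Hence $W$ is not a cone.

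I expect the only delicate step to be the claim that each $\ell_i$ is a line rather than a point, i.e.\ that $h_{L,C}$ contracts no fibre of $L\times C\to C$; this is where the hypothesis that $h_{L,C}$ is a \emph{morphism} (and not merely a rational map) is genuinely used, and it is also the point requiring minor care when $C$ is singular (one passes to a smooth model) or when $C\cap H_i$ happens to be concentrated on a coordinate line (where the Grauert argument, rather than the coordinate-point reformulation, is the robust one). Everything after that—the reducibility of $W\cap H_i$ and the shared-vertex contradiction—is formal.
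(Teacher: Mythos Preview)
Your proof is correct and follows essentially the same route as the paper: produce a line $\ell_i=L\star q_0\subset W\cap H_i$ for each $i$ by choosing $q_0\in C\cap H_i$, deduce that $W\cap H_i$ is reducible (hence non-transversal), and then rule out the cone case by observing that the vertex would have to lie on every $\ell_i\subset H_i$. The paper is terser at the one point you flag as delicate (it simply asserts that $L\star q_0$ is a line once $h_{L,C}$ is a morphism), whereas you supply the justification; note that your ``alternative'' kernel argument in fact works uniformly, not only when $q_0$ has a single vanishing coordinate: if $L$ met the coordinate subspace $\langle e_s:q_{0,s}=0\rangle$ at some $p$, then $p\star q_0$ would be undefined, contradicting the morphism hypothesis, so $L\star q_0$ is always a line and the Grauert detour is unnecessary.
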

\begin{proof}
    By \Cref{prop:higherdegreesurface}, we know that $W$ is either a cone or singular along a curve. Take $p_i \in W \cap H_i$ for each $i = 0, \ldots, 3$. Since $h_{L,C}$ is a morphism, 
    $L \star p_i$ is a line contained in $W \cap H_i$ for all $ i = 0, \ldots, 3$. Hence, $W$ intersects $H_i$ non-transversally for all $i = 0, \ldots, 3$. Now, suppose by contradiction that $W$ is a cone and denote by $p \in W$ its vertex. Then, every line in $W$ contains $p$. In particular, $p \in L \star p_i \subset H_i$ for each $i = 0, \ldots , 3$, i.e.\ $p \in H_0 \cap H_1 \cap H_2 \cap H_3 = \emptyset$, a contradiction.
\end{proof}

\begin{example}
    As an example of \Cref{prop:higherdegreesurface}, let $L\subset \bbP^3$ be a generic line through $(0:0:1:1)$ and $C\subset \bbP^3$ be a generic (plane) conic through $(1:1:0:0)$. The result is a cubic surface singular along a line. Direct computation were performed with the support of the algebra software Macaulay2 \cite{M2}, see \cite{datahadamardquadric} for the code.
\end{example}

\bibliographystyle{alphaurl}
\bibliography{hadaquadric_v2.bib}

\end{document}